\newtheorem{theorem}{Theorem}[section]
\newtheorem{lemma}[theorem]{Lemma}
\newtheorem{definition}[theorem]{Definition}
\definecolor{mycolor}{RGB}{85,125,250}
\definecolor{gcolor}{RGB}{0.01,0.199,0.1}
\tikzset{declare function = {
                aux(\x)= (\x <= 0) * (0) +
                          and(\x > 0, \x < 1) * ((3*(\x-1)^2-2*(\x-1)^3)/( 3*(\x-1)^2-2*(\x-1)^3 + (3*\x^2-2*\x^3))) +
                          (\x >= 1) * (1)
               ;
               v(\x)= (abs(\x) <= 1/3) * (1) +
                          and(abs(\x) >= 1/3, abs(\x) <= 2/3) * (cos(pi/2)*aux(3*abs(\x)-1))    +
                          (abs(\x) >= 2/3) * (0)
               ;
               left(\x)= (\x >= -1/3) * (0) +
                          and(\x <= -1/3, \x >= -2/3) * (cos(pi/2)*v(3*abs(\x)-1))    +
                          (\x <= -2/3) * (0)
               ;
               right(\x)= (\x <= 1/3) * (0) +
                          and(\x >= 1/3, \x <= 2/3) * (cos(pi/2)*v(3*abs(\x)-1))    +
                          (\x >= 2/3) * (0)
               ;
               mid(\x)= (abs(\x) <= 1/3) * (1) + (abs(\x) >= 1/3) * (0)
               ;
               left_adap(\x)= (\x >= -1/3) * (0) +
                          and(\x <= -1/3, \x >= -2/3) * (cos(pi/2)*v(3*abs(\x)-1))    +
                          (\x <= -2/3) * (0)
               ;
               right_adap(\x)= (\x <= 1/3) * (0) +
                          and(\x >= 1/3, \x <= 2/3) * (cos(pi/2)*v(3*abs(\x)-1))    +
                          (\x >= 2/3) * (0)
               ;
               mid_adap(\x)= (abs(\x) <= pi/18) * (1) + (abs(\x) >= pi/18) * (0)
               ;
               }}
\newcommand{\abs}[1]{\lvert#1\rvert}
\newcommand{\norm}[1]{\lVert#1\rVert}
\newcommand{\Fo}{\mathbf{F}}
\newcommand{\Ko}{\mathbf{K}}
\DeclareMathOperator{\ran}{ran}       
\newcommand{\Base}{\boldsymbol \Psi}
\newcommand{\synthesis}{\Base^*}
\newcommand{\analysis}{\Base}
\newcommand{\boldframe}{\boldsymbol \psi}
\newcommand{\winkel}{\phi}
\newcommand{\limset}{\Omega}
\newcommand{\inner}[1]{\left\langle#1\right\rangle}
\newcommand{\set}[1]{\{#1\}}
\newcommand{\tik}{\mathcal{T}}
\newcommand{\dist}{\mathcal{F}_0}
\newcommand{\fun}{\mathcal{F}}
\newcommand{\gun}{\mathcal{G}}
\newcommand{\reg}{\mathcal{R}}
\newcommand{\coeff}{\theta}
\newcommand{\signal}{u}
\newcommand{\aux}{w}
\newcommand{\data}{v}
\newcommand{\C}{\mathbb{C}}
\newcommand{\R}{\mathbb{R}}
\newcommand{\N}{\mathbb{N}}
\newcommand{\Z}{\mathbb{Z}}
\newcommand{\sph}{\mathbb{S}}
\newcommand{\dom}{\mathcal{D}}
\newcommand{\al}{\alpha}
\newcommand{\la}{\lambda}
\newcommand{\La}{\Lambda}
\newcommand{\vis}{\text{vis}}
\newcommand{\inv}{\text{inv}}
  \let\div\relax
  \DeclareMathOperator{\div}{div}
\newcommand*\diff{\mathop{}\!\mathrm{d}}
\DeclareMathOperator*{\argmin}{arg\,min}
\DeclareMathOperator{\supp}{supp}
\newcommand{\No}{\mathbf{N}}
\colorlet{lred}{red!40}
\colorlet{lgreen}{green!40}
\colorlet{lblue}{blue!40}
\definecolor{bananamania}{rgb}{0.98, 0.91, 0.71}
\numberwithin{equation}{section}
\numberwithin{theorem}{section}
\numberwithin{figure}{section}
\title{Data-proximal complementary $\ell^1$-TV reconstruction for limited data CT}
\date{\today}
\author{Simon Göppel}
\affil{Department of Mathematics, University of Innsbruck\authorcr
Technikerstrasse 13, 6020 Innsbruck, Austria\authorcr
E-mail:  \texttt{simon.goeppel@uibk.ac.at}
 }
\author{Jürgen Frikel}
\affil{Department of Computer Science and Mathematics, OTH Regensburg\authorcr Galgenbergstra{\ss}e 32, 93053 Regensburg, Germany\authorcr
E-mail:  \texttt{juergen.frikel@oth-regensburg.de}
 }
\author{Markus Haltmeier}
\affil{Department of Mathematics, University of Innsbruck\authorcr
Technikerstrasse 13, 6020 Innsbruck, Austria\authorcr
E-mail:  \texttt{markus.haltmeier@uibk.ac.at}
 }
\begin{document}

\maketitle

\begin{abstract} 
In a number of tomographic applications, data cannot be fully acquired, resulting in a severely underdetermined image reconstruction. In such cases, conventional methods lead to reconstructions with significant artifacts. To overcome these artifacts, regularization methods are applied that incorporate additional information.  An important example is TV reconstruction, which is known to be efficient at compensating for missing data and reducing reconstruction artifacts. At the same time, however, tomographic data is also contaminated by noise, which poses an additional challenge. The use of a single regularizer must therefore account for both the missing data and the noise.  However, a particular regularizer may not be ideal for both tasks. For example, the TV regularizer is a poor choice for noise reduction across multiple scales, in which case $\ell^1$ curvelet regularization methods are well suited. To address this issue, in this paper we introduce a novel variational regularization framework that combines the advantages of different regularizers. The basic idea of our framework is to perform reconstruction in two stages, where the first stage mainly aims at accurate reconstruction in the presence of noise, and the second stage aims at artifact reduction. Both reconstruction stages are connected by a data proximity condition. The proposed method is implemented and tested for limited-view CT using a combined curvelet-TV approach.   We define and implement a curvelet transform adapted to the limited-view problem and illustrate the advantages of our approach in numerical experiments.
\medskip\noindent \textbf{Keywords:}  
Image reconstruction, limited data, artifact reduction, sparse regularization, wedge-adapted curvelets.    

\end{abstract}

\section{Introduction}

Limited data computed tomography (CT) is a prerequisite for a wide range of applications such as digital breast tomosynthesis, dental tomography and non-destructive testing. In this case, the available data is only a subset of the full data that would be required to uniquely identify the scanned object. Due to the lack of available scans, certain image features are invisible and important information may be obscured by artifacts generated during reconstruction \cite{Quinto93,quinto2017artifacts}. Although the characterization of limited view artifacts has been well researched \cite{frikel2013characterization,frikelquinto2016,Borg2018}, effective artifact reduction or compensation for missing data is still a challenge. This is even more true when the tomographic data is noisy, which creates additional hurdles.

Mathematically, limited-data CT can be written as an inverse problem of the form 
\begin{equation} \label{eq:ip}
	 \data^\delta = \No_\delta ( \Ko_\limset  \signal )  \,,
\end{equation}
where  $\signal \in L^2(\R^2)$ is the unknown image to be recovered, $\Ko_\limset$ denotes the Radon transform with restricted angular range $\limset \subseteq \sph^1$ and $\No_\delta$ describes the noise in the data parameterized by the noise level $\delta >0$. While the inverse problem of recovering an image from CT measurements with complete noisy data is already ill-posed \cite{natterer2001mathematics}, the reconstruction problem for incomplete data is severely under-determined. Direct methods such as filtered back projection (FBP) are sensitive to noise and do not handle missing data well, leading to typical limited data artifacts.  To account for noise and missing data, further information that is available about the object to be recovered must be incorporated. Specific methods are therefore required that can both reliably remove noise and avoid artifacts caused by limited data.


\subsection{Variational regularization}

One of the most successful approaches to problems of the form \eqref{eq:ip} is variational regularization \cite{benning2018modern,scherzer2009variational}, in which a stable and robust solution $\signal_\al^\delta \in L^2(\R^2)$ is determined as  minimizer of 
\begin{equation} \label{eq:tik}
 	\tik_\alpha(\signal, \data^\delta) 
	= \frac{1}{2} \norm{\Ko_\limset \signal - \data^\delta}^2    +  \al \reg(\signal) \,.
\end{equation} 
Here  $\reg \colon L^2(\R^2)  \to \R \cup \{\infty\}$ is a suitable regularizer incorporating prior information about the image  to be recovered and $\norm{\Ko_\limset \signal - \data^\delta}^2/2$ is the least squares data fitting functional.  The variational approach offers great flexibility. In particular, it can be adapted to the forward problem, the signal class, and the noise. For example, total variation $ \reg(\signal) = \abs{\signal}_{\rm TV} $ has been shown to be a good prior to complete missing data \cite{persson2001total,velikina2007limited,sidky2008image,wang2017reweighted}. On the other hand, the $\ell^1$-norm  $ \reg(\signal) = \norm{\analysis  \signal}_1$ of wavelet or curvelet coefficients $\analysis  \signal$ has been shown to be statistically optimal for Radon inversion from complete data \cite{candes2002recovering}. On the downside, the mono-scale nature of the total variation does not lead to an optimal reconstruction in the presence of noise \cite{haltmeier2022variational} and  $\ell^1$-reconstructions hardly account for missing data \cite{sahiner1993limited,rantala2006wavelet}.  

The individual advantages and disadvantages of specific regularizers have led to so-called hybrid methods that combine two different regularizers within the variational regularization framework \eqref{eq:tik}. For example, hybrid $\ell^1$-TV methods \cite{vandeghinste2012combined,kai2020moreau} use the regularizer $\reg(\signal) = \alpha \abs{\signal}_{\rm TV} + \beta \norm{\analysis  \signal}_1$. Given the above strengths and limitations of each individual regularizer, this is particularly attractive for CT with noisy limited data. However, the single hybrid regularizer must again account for both, the limited data and the noise, which is a challenging task.  Unfortunately, a fixed hybrid regularizer cannot fully avoid the drawbacks of the individual terms.  For example, the TV term still leads to over or under smoothing of certain scales in the visible range, while the curvelet part  still tries to suppress intensity values of invisible coefficients. To avoid these negative impacts, it is necessary to adapt each regularizer to its actual purpose.
 
 \subsection{Main contribution}
 
In this paper, we present a novel complementary $\ell^1$-TV algorithm that addresses both the limited data problem and the noise reduction problem. It is based on a modified variational regularization approach that selects a regularizer for each of the two tasks and combines them in a  synergetic way through data-proximity. More precisely, let   $\synthesis \colon  \ell_2 (\Lambda) \to L^2(\R^2)$  denote the synthesis operator of some framewith index set $\Lambda$. The proposed iterative reconstruction method generates two reconstructions $\coeff \in  \ell_2 (\Lambda)$ and $\signal \in L^2(\R^2)$ by alternately solving 
\begin{align*} 
     &\min_\coeff 
    \norm{\Ko_\limset (\synthesis \coeff ) - \data^\delta}^2/2  
    + \alpha \norm{\coeff}_1  
    +  \mu \norm{ \Ko_\limset( \signal - \synthesis \coeff)}^2/2 \\
     &\min_\signal
    \reg(\signal) +
     \mu \norm{ \Ko_\limset( \signal - \synthesis \coeff)}^2/2  \,. 
\end{align*}
Here, the auxiliary reconstruction $\synthesis \coeff$ targets a noise-suppressing reconstruction addressed by the sparsity term $\norm{\coeff}_1$.  The primary reconstruction $\signal$ implicitly performs data completion by updating $\synthesis(\coeff)$ based on the regulariser $\reg(\signal)$. A key element is the coupling of the two reconstructions, which requires that $\norm{ \Ko_\limset( \signal - \synthesis \coeff)}^2$ is small, which we will refer to as data proximity.   As a result, both $\signal$ and $\synthesis \coeff$ approximately give the data $\data^\delta$. There are many possible solutions due to ill-poseness, and the specific regularisers allow $\signal$ and $\synthesis \coeff$ to be significantly different.

Note that our method is very different from post-processing an original reconstruction. In the latter case, the data proximity term $\norm{\Ko_\limset (\signal - \synthesis \coeff)}^2$ is replaced by a proximity $\norm{\signal - \synthesis \coeff }^2$ in reconstruction space, which forces $\signal$ to be close to $\synthesis(\coeff)$, making artefacts difficult to remove. We also note that our concept is applicable to any image reconstruction problem with limited data, and that we focus on CT with limited data for the sake of clarity. In addition, we propose several variations of the data-proximity coupling, which will be discussed later in the manuscript.

\section{Background}

Throughout this article, we will use the following notation. The Fourier transform of a function $\signal \in L^2 (\R^2)$ is denoted by $ \Fo \signal$, where $\Fo \signal (\xi) \triangleq \int_{\R^2} \signal(x) e^{-i\inner{\xi, x}}\diff x$ for integrable functions and extended  to $L^2 (\R^2)$ by continuity. We write $\signal^\ast (x) \triangleq \overline{\signal(-x)}$, where $\overline{z}$ denotes the complex conjugate of $z\in\C$.  Recall that the Fourier transform converts convolution into multiplication. In particular, for $\signal,w \in L^2(\R^2)$, with $\Fo \signal \in L^\infty (\R^2)$, the convolution $\signal \ast w \in L^2(\R^2)$ is well-defined and given by $\signal \ast w \ = \Fo^{-1} ( (\Fo \signal) \cdot (\Fo w ) )$.  Furthermore, we write $\Fo_2 \signal$ for the Fourier transform of $\signal \in L^2 (\sph^1\times \R)$ with respect to the second argument.

\subsection{The Radon transform}

The Radon transform with full-angular range  maps any function  $\signal \in L^1(\R^2) \cap L^2(\R^2)$ to the line integrals
\begin{equation*}
    \Ko \signal (\omega, s) \triangleq \int_{\omega^\perp} \signal (x + s \omega) \diff x 
    \quad \text{ for } (\omega, s)  \in \sph^1 \times \R \,.
\end{equation*}
Here $\sph^1 = \set{\omega \in \R^2 \mid \norm{\omega}=1}$,  and any line of integration $\{x \in \R^2 \mid \inner{\omega, x} =s \}$  is described by a unit normal vector $\omega \in \sph^1$  and oriented distance $s$ from the origin.   The Radon transform  can extended to an unbounded densely  defined closed operator $\Ko \colon \dom(\Ko) \subseteq L^2(\R^2)  \to L^2(\sph^1 \times \R)$ with domain $\dom(\Ko) \triangleq \{ \signal \in L^2(\R^2) \mid \norm{\cdot}^{-1/2} \Fo \signal \in L^2(\R^2) \}$; see~\cite{smith1977practical}.


 \begin {lemma}[Fourier slice theorem] \label{lem:fourier-slice}
 For all  $\signal  \in \dom(\Ko) $ we have $\Fo_2 \left( \Ko \signal \right) (\omega, \sigma) = \Fo \signal (\sigma \omega) $.
 \end{lemma}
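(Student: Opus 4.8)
The plan is the classical two-step argument: first prove the identity by a Fubini computation on a dense, well-behaved class of functions, and then extend it to all of $\dom(\Ko)$ by a continuity/density argument.

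First I would take $\signal\in L^1(\R^2)\cap L^2(\R^2)$, for which $\Ko\signal(\omega,\cdot)$ is given by an absolutely convergent integral and lies in $L^1(\R)$ for every $\omega\in\sph^1$ (by Tonelli, since the map $(y,s)\mapsto y+s\omega$ from $\omega^\perp\times\R$ to $\R^2$ is measure preserving). Fixing $\omega$ and writing $x=y+s\omega$ with $y\in\omega^\perp$ and $s=\inner{\omega,x}$, I would compute
\begin{align*}
\Fo_2(\Ko\signal)(\omega,\sigma)
&= \int_\R\Ko\signal(\omega,s)\,e^{-i\sigma s}\diff s
 = \int_\R\Bigl(\int_{\omega^\perp}\signal(y+s\omega)\diff y\Bigr)e^{-i\sigma s}\diff s\\
&= \int_{\R^2}\signal(x)\,e^{-i\inner{\sigma\omega,\,x}}\diff x
 = \Fo\signal(\sigma\omega),
\end{align*}
where the interchange of integrals is Fubini's theorem (valid because $\signal\in L^1(\R^2)$) and the last two equalities use $\sigma\inner{\omega,x}=\inner{\sigma\omega,x}$ together with the definition of $\Fo$.

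Next I would pass to the general case $\signal\in\dom(\Ko)$ by showing that both sides define continuous maps from $\dom(\Ko)$, equipped with the graph norm, into $L^2(\sph^1\times\R)$, and then invoking that $L^1(\R^2)\cap L^2(\R^2)$ (indeed $C_c^\infty(\R^2)$) is a core for the closed operator $\Ko$, i.e.\ dense for the graph norm; see \cite{smith1977practical}. Continuity of $\signal\mapsto\Fo_2(\Ko\signal)$ is immediate, since $\Ko$ is bounded for its graph norm and $\Fo_2$ is a constant multiple of an isometry on $L^2(\sph^1\times\R)$. For $\signal\mapsto\bigl[(\omega,\sigma)\mapsto\Fo\signal(\sigma\omega)\bigr]$ I would switch to polar coordinates $\xi=\sigma\omega$ and use the symmetry $(\omega,\sigma)\mapsto(-\omega,-\sigma)$ to obtain
\begin{equation*}
\int_{\sph^1}\!\int_\R\abs{\Fo\signal(\sigma\omega)}^2\diff\sigma\,\diff\omega
= 2\int_{\R^2}\frac{\abs{\Fo\signal(\xi)}^2}{\norm{\xi}}\diff\xi
= 2\,\bigl\|\,\norm{\cdot}^{-1/2}\Fo\signal\,\bigr\|_{L^2(\R^2)}^2 ,
\end{equation*}
which is finite exactly because $\signal\in\dom(\Ko)$; hence the right-hand side is a genuine element of $L^2(\sph^1\times\R)$ and depends boundedly on $\signal$. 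Approximating $\signal\in\dom(\Ko)$ by functions in $L^1(\R^2)\cap L^2(\R^2)$ in the graph norm and taking limits in the identity of the first step then gives the claim as an equality in $L^2(\sph^1\times\R)$.

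The Fubini computation is routine. The delicate part will be the extension: one has to make sure that the pointwise expression $\Fo\signal(\sigma\omega)$ really defines an $L^2(\sph^1\times\R)$ function for \emph{every} $\signal$ in the unbounded domain $\dom(\Ko)$ and that it depends continuously on $\signal$, since only then is the density argument legitimate — and this is precisely the place where the (slightly unusual) domain condition $\norm{\cdot}^{-1/2}\Fo\signal\in L^2(\R^2)$ is essential.
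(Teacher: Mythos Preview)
The paper states this lemma without proof --- it is the classical Fourier slice theorem, treated as well known background (cf.\ \cite{natterer2001mathematics,smith1977practical}). Your argument is correct and is exactly the standard one: establish the identity on $L^1(\R^2)\cap L^2(\R^2)$ by Fubini, then extend to $\dom(\Ko)$ by density and continuity, using the polar-coordinate computation to see that the domain condition $\norm{\cdot}^{-1/2}\Fo\signal\in L^2(\R^2)$ is precisely what makes the right-hand side an $L^2(\sph^1\times\R)$ function. Since the paper offers no proof, there is nothing further to compare.
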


Opposed to the full data case, in limited data  CT, the  Radon transform is only known on a certain subset. Equivalently, we may model limited view data with a  binary  mask as we will do  here. For any  subset  $A \subseteq    \sph^1 \times \R$ we  denote by  $\chi_A$ the  indicator  function  defined  by $\chi_A(\omega,s ) = 1$ if  $(\omega,s)  \in A$ and $\chi_A(\omega,s) = 0$ otherwise.

\begin{definition}
For $\limset \subseteq \sph^1$  we define the  limited-angle Radon transform as 	
\begin{equation*}
	\Ko_\limset  \colon \dom(\Ko_\limset) \subseteq L^2(\R^2) \to L^2(\sph^1 \times \R)  \colon \signal  \mapsto \chi_{\limset \times \R}  \cdot  (\Ko \signal) \,.
\end{equation*}
\end{definition}

The Fourier slice theorem  states that for any $\omega \in \sph^1$, the Fourier transform  of the Radon transform of some function in the second component equals the Fourier transform of that function along the Fourier slice  $\set{ \sigma \omega \mid \sigma \in \R}$.          
In particular, limited angle CT data is in one-to-one correspondence with the Fourier transform $\Fo \signal$ restricted  to the set $W_\limset \triangleq \set{  \sigma  \omega  \mid  \sigma \in \R  \wedge \omega \in \limset } $. We will call $W_\limset$ the visible wavenumber set, as only Fourier coefficients for wave numbers in $W_\limset$ are provided by the data. Accordingly, we call $\R^2 \setminus W_\limset$ the invisible wavenumber set.  We see that  if  $\R^2 \setminus W_\limset$ has non-vanishing  measure, then $\Ko_\limset$ has non-vanishing kernel consisting of all  functions $\signal \in \dom (\Ko_\limset) =  \dom(\Ko) \cap W_\limset$ with $\supp \Fo u \subseteq W_\limset$.

In limited view CT the set $W_\limset$ forms a wedge, whereas in the sparse view case the set $W_\limset$ forms a fan; see  the left two images in Figure~\ref{fig:setup}.

 \begin{figure}[tbh!]
\centering
  \begin{tikzpicture}[scale=1.4]
	\filldraw [fill=gcolor, draw=black, opacity=0.2] (1.5,1.5) rectangle (-1.5,-1.5);	
	
	\draw[xshift=0cm,yshift=0cm,dotted,red,name path=A] (-1.5, -1.5) -- (1.5, 1.5);
	\draw[xshift=0cm,yshift=0cm,dotted,red,name path=B] (-1.5, 1.5) -- (1.5, -1.5);
	\draw[xshift=0cm,yshift=0cm,dotted,red,name path=C] (-0.7, -1.5) -- (0.7, 1.5);
	\draw[xshift=0cm,yshift=0cm,dotted,red,name path=D] (-0.7, 1.5) -- (0.7, -1.5);
	\tikzfillbetween[of=B and A]{color=mycolor, opacity=1.0};
         \tikzfillbetween[of=A and C]{color=mycolor, opacity=1.0};
         \tikzfillbetween[of=B and D]{color=mycolor, opacity=1.0};

	\draw [black, dashed, xshift=0cm, domain=0:360] plot(\x:1);

	\draw[xshift=0cm,yshift=0cm,-latex,name path=C] (-1.5, 0) -- (1.7, 0);
	\draw[xshift=0cm,yshift=0cm,-latex,name path=D] (0, -1.5) -- (0, 1.7);

        \draw [<->, xshift=0cm, domain=-64:64] plot(\x:1.25);
		
  \end{tikzpicture}
  \quad
   \begin{tikzpicture}[scale=1.4]
	\filldraw [fill=gcolor, draw=black, opacity=0.2] (1.5,1.5) rectangle (-1.5,-1.5);	
	0.2375
	\draw[line width=0.5mm, xshift=0cm,yshift=0cm,mycolor,name path=A0] (-1.5, -1.5) -- (1.5, 1.5);
	\draw[line width=0.5mm, xshift=0cm,yshift=0cm,mycolor,name path=A1] (-1.5, -1.0898) -- (1.5, 1.0898);
	\draw[line width=0.5mm, xshift=0cm,yshift=0cm,mycolor,name path=A2] (-1.5, -0.76428) -- (1.5, 0.76428);
	\draw[line width=0.5mm, xshift=0cm,yshift=0cm,mycolor,name path=A3] (-1.5, -0.4874) -- (1.5, 0.4874);
	\draw[line width=0.5mm, xshift=0cm,yshift=0cm,mycolor,name path=A4] (-1.5, -0.2375) -- (1.5, 0.2375);
	\draw[line width=0.5mm, xshift=0cm,yshift=0cm,mycolor,name path=A5] (-1.5, -0.0) -- (1.5, 0.0);
	\draw[line width=0.5mm, xshift=0cm,yshift=0cm,mycolor,name path=A6] (-1.5, 0.2375) -- (1.5, -0.2375);
	\draw[line width=0.5mm, xshift=0cm,yshift=0cm,mycolor,name path=A7] (-1.5, 0.4874) -- (1.5, -0.4874);
	\draw[line width=0.5mm, xshift=0cm,yshift=0cm,mycolor,name path=A8] (-1.5, 0.76428) -- (1.5, -0.76428);
	\draw[line width=0.5mm, xshift=0cm,yshift=0cm,mycolor,name path=A9] (-1.5, 1.0898) -- (1.5, -1.0898);
	\draw[line width=0.5mm, xshift=0cm,yshift=0cm,mycolor,name path=A10] (-1.5, 1.5) -- (1.5, -1.5);
	\draw[line width=0.5mm, xshift=0cm,yshift=0cm,mycolor,name path=A11] (-1.0898, -1.5) -- (1.0898, 1.5);
	\draw[line width=0.5mm, xshift=0cm,yshift=0cm,mycolor,name path=A12] (-0.76428, -1.5) -- (0.76428, 1.5);
	\draw[line width=0.5mm, xshift=0cm,yshift=0cm,mycolor,name path=A13] (-0.4874, -1.5) -- (0.4874, 1.5);
	\draw[line width=0.5mm, xshift=0cm,yshift=0cm,mycolor,name path=A14] (-0.2375, -1.5) -- (0.2375, 1.5);
	\draw[line width=0.5mm, xshift=0cm,yshift=0cm,mycolor,name path=A15] (-0.0, -1.5) -- (0, 1.5);
	\draw[line width=0.5mm, xshift=0cm,yshift=0cm,mycolor,name path=A16] (0.2375, -1.5) -- (-0.2375, 1.5);
	\draw[line width=0.5mm, xshift=0cm,yshift=0cm,mycolor,name path=A17] (0.4874, -1.5) -- (-0.4874, 1.5);
	\draw[line width=0.5mm, xshift=0cm,yshift=0cm,mycolor,name path=A18] (0.76428, -1.5) -- (-0.76428, 1.5);
	\draw[line width=0.5mm, xshift=0cm,yshift=0cm,mycolor,name path=A19] (1.0898, -1.5) -- (-1.0898, 1.5);

	\draw[xshift=0cm,yshift=0cm,-latex,name path=C] (-1.5, 0) -- (1.7, 0);
	\draw[xshift=0cm,yshift=0cm,-latex,name path=D] (0, -1.5) -- (0, 1.7);

	\draw [black, dashed, xshift=0cm, domain=0:360] plot(\x:1);

		
  \end{tikzpicture}
  \quad 
         \includegraphics[width=0.27\columnwidth]{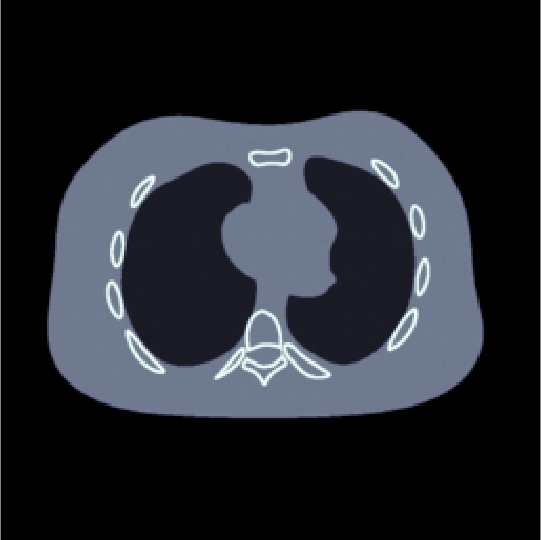}
  \caption{Left: Visible wavenumbers (blue) for limited view data covering $\ang{130}$. Middle: Visible wavenumbers (blue) for sparse angular sampling using 20 angles. Right: Original NCAT phantom used for the numerical simulations shown below.}
  \label{fig:setup}
\end{figure}

\subsection{Frames and TI-frames}

We frequently use that the desired image  $\signal$ has a sparse representation o approximation in a suitable frame. In particular, we work with curvelet frames, which give an optimal sparse representation of cartoon-like images \cite{candes2002recovering}. The same is true for shearlets \cite{kutyniok2011compactly}. Curvelets and shearlets form frames of $L^2(\R^2)$, and this section provides some necessary background. 

\subsubsection{Translational-invariant (TI) frames}

Let $I$ be an at most countable index set. A family $(\boldframe_i)_{i \in I}$ in $L^2(\R^2)$ is called a translation invariant frame (TI-frame) for $L^2(\R^2)$ if $\Fo \boldframe_i \in L^\infty (\R^2)$ for all $i \in I$ and from some constants $A,B>0$ we have  
\begin{equation}\label{eq:TI-frame}
    \forall \signal \in L^2(\R^2) \colon \quad A\norm{\signal}^2 \leq \sum_{i \in I} \norm{\boldframe_i \ast \signal}^2 \leq B \norm{\signal}^2 \,,
\end{equation}
A TI-frame is called tight if $A=B=1$.  From $ \boldframe_i \ast \signal = \Fo^{-1} ( (\Fo \boldframe_i) \cdot (\Fo \signal)) $  and Plancherel's theorem we  get $\norm{ \boldframe_i \ast \signal}^2  = 2\pi \int_{\R^2} \abs{ \Fo \boldframe_i}^2 \abs{\Fo \signal}^2$. The right inequality  in  \eqref{eq:TI-frame} thus implies  $(\boldframe_i\ast \signal)_{i \in I} \in \ell^2 (I, L^2(\R^2))$. 

Along with TI-frames, we will make use of the  TI-analysis and TI-synthesis operators  respectively,  which are defined by
        \begin{align*}
            &\analysis \colon L^2(\R^2) \to \ell^2(I,L^2(\R^2)) \colon  \signal \mapsto (\boldframe_i \ast \signal )_{i \in I}
 \\
            &\synthesis \colon \ell^2(I,L^2(\R^2)) \to L^2(\R^2) \colon  (\coeff_i)_{i  \in I}\mapsto \sum_{i  \in I}  \boldframe^\ast_i \ast \coeff_i  \,. 
 \end{align*}
Note that the TI-analysis operator and the TI-synthesis operator are the adjoint of each other. The composition $\synthesis\analysis$ is known as the TI-frame operator.  Using the definition of the TI-analysis operator we can rewrite the frame condition \eqref{eq:TI-frame} as $A \norm{\signal}^2 \leq \norm{\analysis \signal}^2\leq B \norm{\signal}^2 $ for $\signal \in L^2(\R^2)$. The right inequality  in \eqref{eq:TI-frame} states that the TI-analysis operator $\Base$ is a well-defined  bounded linear operator. The left inequality states that $\Base$ is bounded from below, that is, the pseudo-inverse $\analysis^\ddagger \triangleq (\synthesis\analysis)^{-1} \Base^*$ is continuous.

See \cite{mallat2008wavelet} for general background on TI-frames and \cite{goppel2023translation,parhi2023sparsity,coifman1995translation} for TI-frames in the context of inverse problems.

\subsubsection{Regular frames}

Regular frames use inner products instead of convolutions as in TI-frames for defining coefficients.   
Let $\La$ be an at most countable index set. A family $(\boldframe_\la)_{\la\in\La}$ in $L^2(\R^2)$ is called a frame for $L^2(\R^2)$ if 
\begin{equation}\label{eq:frame}
    \forall \signal \in L^2(\R^2) \colon \quad A\norm{\signal}^2 \leq \sum_{\la\in\La} \abs{\inner{\boldframe_\la , \signal}}^2 \leq B \norm{\signal}^2,
\end{equation}
for some $A,B>0$. A frame is called tight if $A=B=1$. In some sense the TI-frame can be seen as a frame with index $I \times \R^2$.  Note however that clearly the TI-frame  not a regular frame because  $I \times \R^2$ is uncountable. Similar to the TI case, the analysis and synthesis operators of a regular frame are defined by
        \begin{align*}
            &\analysis \colon L^2(\R^2) \to \ell^2(\La) \colon
            \signal \mapsto (\inner{\boldframe_\la , \signal})_{\la  \in \La}
            \\
            &\synthesis \colon \ell^2(\La) \to L^2(\R^2) \colon  
            (\coeff_\la)_{\la  \in \La}\mapsto \sum_{\la  \in \La} \boldframe^\ast_\la \, \coeff_\la   
\end{align*}
 and the composition $\synthesis\analysis$ is the frame operator.

Under suitable regularity assumptions \cite{daubechies1992ten,mallat2008wavelet}, a regular frame with index set $I \times \Z^2$ can be obtained from a TI-frame with index set $I$ by discretizing the convolution in \eqref{eq:TI-frame}.  For multiscale systems such as wavelets of curvelets, the associated $I$-dependent subsampling destroys translation invariance, which can lead to degraded performance and reconstruction. The advantages of the TI-frames over regular frames have been investigated in \cite{coifman1995translation} for plain denoising and in \cite{goppel2023translation} for general inverse problems.

 \subsection{Variational image reconstruction}

A practically successful and theoretically well analyzed  method  for solving \eqref{eq:ip} is  variational regularization \cite{benning2018modern,scherzer2009variational}. Here, the available prior information is incorporated by a regularization functional $\reg \colon L^2(\R^2) \to \R \cup \{ \infty \}$ and an approximate  image is recovered by  minimizing the Tikhonov functional $\tik_\alpha (\signal, \data^\delta) = \norm{\Ko_\limset \signal - \data^\delta}^2/2 + \al \reg(\signal)$ with respect to $\signal$; see  \eqref{eq:tik}.    

Variational regularization  is well-posed, stable and convergent   in the following sense: (i) $\tik_{\alpha} (\cdot, \data^\delta)$ has  a minimizer $\signal_\alpha^\delta$; (ii) minimizers  depend continuously on data  $ \data^\delta$;  (iii) if $\norm{\data - \data^\delta} \leq \delta $  with $\data \in \ran(\Ko_\limset)$ and $\alpha = \alpha (\delta) $ is selected properly then $\signal_\alpha^\delta$ converges (as $\delta \to 0$) to an $\reg$-minimizing solution of $\Ko_\limset \signal = \data$ defined by 
\begin{equation} \label{eq:Rmin}
	\min_\signal  \reg(\signal) \quad \text{ such that } \Ko_\limset \signal = \data \,.  
\end{equation}
These  properties hold true  under the assumption that  $\reg$ is convex, weakly lower semicontinuous and coercive  \cite{scherzer2009variational}. 
The characterization \eqref{eq:Rmin} of the limiting  solutions reveals two separate tasks to be performed by the regularizer:  Besides noise-robust reconstructions via minimization of the Tikhonov functional, it also serves as criteria for selecting a particular solution in the limit of noise-free data.  Obviously, it is difficult to optimally perform both tasks with a single regularizer.
Note that the  selection of a particular solution via  \eqref{eq:Rmin}  addresses the non-uniqueness and implicitly performs data completion to estimate the missing data $\Ko_{\sph^1\setminus \limset} \signal$. This is equivalent to the selection of the component of the reconstruction in the kernel $ \ker(\Ko_\limset)$. The data completion strongly depends on the chosen regularizer.   The standard Hilbert space norm regularizer $\reg = \norm{\cdot}^2/2$ completes missing data with zero, different regularizers perform non-zero data completion. 

While there are many reasonable choices for the regularizer $\reg$, in this paper we will mainly focus on the $\ell^1$-norm with respect to a suitably chosen frame  and the total variation, each  one coming with its own benefits and shortcomings.  

\subsubsection{Sparse $\ell^1$-regularization}

Let $\synthesis$ denote the synthesis operator of a frame and set $\analysis^\ddagger  \triangleq  (\synthesis\analysis)^{-1} \synthesis$.  In particular, any $\signal \in L^2(\R^2)$  can be written as $\signal = \analysis^\ddagger \analysis \signal $. Synthesis sparsity means that $ \signal = \synthesis \coeff$ where $\coeff$ has only a few non-vanishing entries, whereas  analysis sparsity refers  to  $\analysis u$ having only few  non-vanishing entries. Sparsity can be implemented via regularization using the  $\ell^1$-norm. There are at least two different basic instances of sparse $\ell^1$-regularization namely the synthesis and analysis formulations   
\begin{align} 
   f_{\al, \delta}^{\rm ana} &= \argmin_\signal \frac{1}{2} \norm{\Ko_\limset \signal - \data^\delta}^2 + \alpha \norm{\analysis \signal}_1 \label{eq:ell1analysis}
    \\ 
    f_{\al, \delta}^{\rm syn}  &=   \synthesis  \left( \argmin_\coeff   \frac{1}{2}  \norm{\Ko_\limset (\synthesis\coeff) - \data^\delta}^2  + \alpha \norm{\coeff}_1   \label{eq:ell1synthesis}   \right)\,.
\end{align}
Synthesis and analysis regularization are equivalent in the basis case where they can  be explicitly computed via the diagonal frame decomposition   \cite{goppel2023translation,ebner2023regularization}.
In the  general case synthesis regularization, analysis regularization and  regularization via the diagonal frame decomposition are however fundamentally different \cite{frikel2020sparse}.   

Frame based sparsity constraints have been widely employed for various reconstruction tasks \cite{vandeghinste2013iterative, bubba2018shearlet, candes2000curvelets}.
Note that  theoretical and practical issues  for  general variational regularization can  in particular be applied  to $\ell^1$-regularization. Additionally, $\ell^1$-regularization comes with improved recovery guarantees both in the deterministic and statistical context \cite{candes2002recovering,grasmair2008sparse,lorenz2009convergence}.

\subsubsection{TV regularization}

Total variation regularization is a special case  of variational regularization \cite{scherzer2009variational,acar1994analysis} where the regularizer in \eqref{eq:tik}  is taken as the total variation (TV) 
\begin{equation*} 
	 \abs{u}_{\rm TV}
	 \triangleq
	 \sup\Big\{ \int_{\R^2} u \div v \mid v \in\mathcal{C}_c^1(\R^2,\R^2) \wedge \norm{v}_{2,\infty}\leq 1\Big\}  \,,
\end{equation*}
where  $\norm{v}_{2,\infty}  \triangleq  \sup_x (v_1(x)^2+ v_2(x)^2)^{1/2}$. 
TV regularization  has been proven to well account for missing data in CT image reconstruction \cite{persson2001total,sidky2008image}. 

Using the TV semi-norm as  regularizer tends to smooth out noise while preserving edges within the image. However as for other mono-scale approaches, there is a trade-off between noise reduction and preserving features at specific scales.  Natural images have features across multiple scales which become  either over or under smoothed depending on the particular choice of the regularization parameter \cite{candes2002new,haltmeier2022variational}.  This already has negative impact for fully sampled tomographic systems or simple denoising. To account for the noise a sufficiently large regularization parameter is required that at the same time removes  structures at  small scales.

\subsubsection{Hybrid regularizers}

Hybrid regularizers aim to  combine benefits of the $\ell^1$ regularizer and an additional regularizer such as the TV-seminorm resulting in        
\begin{equation} \label{eq:hybrid}
    \tik_{\alpha, \beta}^{\rm hybrid} (\signal, \data^\delta)
    =
    \frac{1}{2} \norm{\Ko_\limset \signal - \data^\delta}^2 + \alpha \norm{\analysis\signal}_1 + \beta  \reg(\signal) \,.
   \end{equation}
In that context, the sparsity promoting nature of  $\norm{\cdot}_1$ and the data completion property of  $\reg = \abs{\cdot}_{\rm TV}$ are utilized. The $\ell^1$-term  targets a noise-reduced reconstruction and the $\reg$-term targets artifact reduction. 
Various forms of hybrid $\ell^1$-TV regularization techniques have been proposed \cite{vandeghinste2013iterative,kai2020moreau,luo2017image}.  While these methods have been shown to outperform both pure TV and pure $\ell^1$ regularization, they still carry the limitationscof both approaches. 

Minimizing~\eqref{eq:hybrid} has the drawback that the  $\ell^1$-penalty and the TV penalty work against each other in the following sense. The $\ell_1$-norm enforces sparsity of the reconstructed coefficients and for that purpose seeks to recover an image where missing data  completed by values close  to zero. On the other hand, the strength of  TV is to add missing data in a non-vanishing matter. This can be most clearly seen for plain inpainting where forward operator  is given by the restriction $\data_\limset = \signal |_\limset$. If for example $\signal$ is a constant image then filling the missing data  with this constant results in minimal total variation.  This  however works against  the sparsity constraint in a localized frame which aims to fill missing data with small intensity values.    

\section{Complementary  $\ell^1$-TV reconstruction}

We now describe our proposed framework which basically alternates between a reconstruction step and an artifact reduction step inspired by backward backward (BB) splitting. For the following let $\synthesis \colon \Theta \to L^2(\R^2)$ be the synthesis operator  of a frame  (where $\Theta = \ell^2(\Lambda)$) or  a TI-frame (where $\Theta = \ell^2(\Lambda, L^2(\R^2))$).

\subsection{BB splitting algorithm}

Actual implementation of variational regularization\eqref{eq:tik} requires iterative minimization. Splitting methods are very successful in that context. In particular,  BB splitting  applied to the hybrid approach \eqref{eq:hybrid} will be the starting point of our approach. Consider the  splitting     
$\tik_{\alpha, \beta} (\signal, \data^\delta) = \fun_\alpha(\signal, \data^\delta) + \gun_\beta(\signal)$ with  
\begin{align*}
	\fun_\alpha(\signal, \data^\delta) & \triangleq
	\frac{1}{2}  \norm{\Ko_\limset \signal - \data^\delta}^2 
	+ \alpha \norm{\analysis u }_1 \\
	\gun_\beta(\signal) &\triangleq  \beta  \reg(\signal) \,.
\end{align*}
Because $\fun_\alpha(\cdot, \data^\delta)$ and $\gun_\beta$ are both non-smooth, methods that treat both functionals implicitly are an appealing choice. For that purpose one can use the BB splitting algorithm which with coupling constant $\mu>0$ and starting value $\signal^0 \in L^2(\R^2)$ reads     
\begin{align} \label{eq:bb1}
	\aux^{n+1} &\triangleq
	\argmin_h  \fun_\alpha(\aux, \data^\delta) + \frac{\mu}{2}\norm{ \aux  - \signal^n}^2
\\ \label{eq:bb2}
\signal^{n+1} &\triangleq  \argmin_\signal \gun_\beta(\signal)  +  \frac{\mu}{2}\norm{\aux^n - \signal}^2 \,.
\end{align}
The BB splitting algorithm  is  known  to converge to the minimizer of $\fun_\al (\cdot, \data^\delta ) + \beta  \reg_\mu$  where $\reg_\mu(\signal) \triangleq  \inf_w \reg(\signal) + \mu \norm{\signal-w}^2/2$ is the Moreau envelope  of the hybride regularizer \cite{combettes2011proximal}.  

The iterates of the BB splitting algorithm are noise-reduced near solutions of  \eqref{eq:ip} because of  $\fun_\alpha(\aux, \data^\delta)$ in \eqref{eq:bb1}, and regular  because of $ \gun_\beta$  in \eqref{eq:bb2}.  The iterates  $\aux_n, \signal_n$ are coupled via the proximity measure  $\norm{\signal - \aux}^2/2$  resulting in two sequences that are close to each other in the reconstruction domain.

\subsection{Proposed reconstruction framework}

Our algorithm can be  motivated by the BB splitting iteration \eqref{eq:bb1}, \eqref{eq:bb2} utilizing a synthesis  version  for $\ell^1$-minimization  and  TV regularization for the regularizer $\reg$. The main  difference, however,  to the BB iteration is that the proximity term $\norm{\signal-\aux}^2/2$  in the iterative updates are replaced by the data-proximity coupling term $ \norm{\Ko_\limset ( \signal-\aux) }^2/2$.

Our goal is to construct two sequences $(\coeff_n)_{n \in \N}$ and $(\signal_n)_{n \in \N}$ such that $\synthesis \coeff_n$ as well  $\signal_n$ are approximate solutions of $\Ko_\limset \signal = \data^\delta$, however targeting different particular solutions. The reconstruction $\synthesis \coeff_n$ is a noise reduced reconstructions and $\signal_n$ is an  updated  version of  $\synthesis \coeff_n$ targeting reduced limited data artifacts based on $\reg$. To that end define the functionals      
\begin{align*}
	\fun_\alpha(\coeff, \data^\delta) & \triangleq
	\frac{1}{2}  \norm{\Ko_\limset (\synthesis \coeff ) - \data^\delta}^2 
	+ \alpha \norm{\analysis u }_1 \\
	\gun_\beta(\signal) &\triangleq   \signal \mapsto  \beta \abs{\signal}_{\rm TV}  + \mathds{1}_{\geq 0}   \,,
\end{align*}
with $\mathds{1}_{\geq 0}$ being the indicator function of the positive cone given by $\mathds{1}_{\geq 0} (\signal) = 0$ if $\signal \geq 0$ and  $\mathds{1}_{\geq 0} (\signal) = \infty$ otherwise.   

Image reconstruction is done in an iterative fashion similar to \eqref{eq:bb1} however using the data-proximity coupling  $ \norm{\Ko_\limset ( \signal - \aux) }^2/2$. For that purpose we suggest  the iterative procedure      
\begin{align} \label{eq:dd1}
	\coeff^{n+1} &\triangleq
	\argmin_\coeff  \fun_{\al}( \coeff, \data^\delta) + \frac{\mu}{2} \norm{\Ko_\limset( \signal_n - \synthesis \coeff)}^2
\\ \label{eq:dd2}
\signal^{n+1} &\triangleq  \argmin_\signal \gun_{\beta(n)}(\signal)  +   \frac{\mu}{2}\norm{\Ko_\limset( \signal - \synthesis \coeff_n )}^2 \,,
\end{align}
with starting value $\signal^0 \in L^2(\R^2)$.
Here $\norm{\Ko_\limset( \signal - \synthesis \coeff )}^2/2$ is the data-proximity coupling  term and $\mu, \al, \beta(n) > 0$ are parameters. 
The resulting complementary $\ell^1$-TV  reconstruction procedure is summarized in Algorithm~\ref{alg:DC}.

\begin{algorithm}[tbh!]
\caption{Proposed complementary $\ell^1$-TV minimization}
\begin{algorithmic}
\STATE \text{Choose} $\mu, \al, \beta(n) >0$ and $N \in \N$ 
\STATE \text{Initialize}  $f_0  \gets 0$ and $n \gets 0$
\STATE \textbf{repeat}
\STATE \hspace{0.5cm} $\coeff_{n+1}  \gets \argmin_\coeff  \fun_{\al}( \coeff, \data^\delta) + \mu \norm{\Ko_\limset( \signal_n - \synthesis \coeff)}^2/2$
\STATE \hspace{0.5cm} $\signal_{n+1} \gets \argmin_\signal \gun_{\beta(n)}(\signal)  +   \mu \norm{\Ko_\limset( \signal - \synthesis \coeff_n )}^2/2$
\STATE \hspace{0.5cm} $n \gets n + 1$
\STATE \textbf{until} $n \geq N$
\end{algorithmic}
\label{alg:DC}
\end{algorithm}

The proposed steps \eqref{eq:dd1}, \eqref{eq:dd2} in Algorithm~\ref{alg:DC} come with a clear interpretation. The first step \eqref{eq:dd1} is a sparse $\ell^1$-reconstruction  scheme with good noise handling  capabilities.   The second step minimizes the TV norm with the penalty $\norm{\Ko_\limset( \signal - \synthesis \coeff )}^2/2$ and targets artifact reduction. Note that the number $N$ of outer iterations in Algorithm~\ref{alg:DC}  as well as the parameters  $\mu, \al, \beta(n)$ have influence on the final performance. Its  theoretical analysis of the  is interesting and challenging but beyond the scope of this paper.  

\section{Numerical Experiments}
 
In this section we present numerical results using the proposed  Algorithm~\ref{alg:DC} and compare it with standard filtered back projection (FBP),  $\ell_1$-synthesis regularization \eqref{eq:ell1synthesis}, TV regularization and hybrid $\ell^1$-TV regularization \eqref{eq:hybrid}. We consider a limited view as well as a sparse angle scenario and use the NCAT phantom \cite{segars2008realistic}  as image to be recovered (see Figure~\ref{fig:setup}). The NCAT phantom resembles a thorax CT scan, with the spine at the bottom, and ribs on the sides.  The forward and adjoint Radon transforms are  computed using Matlabs standard functions. To mimic real life applications we perturbed the data by Poisson noise with different noise levels corresponding to $10^{a}$ incident photons per pixel bin with $a = 3,4,5$.

\subsection{Implementation details}
    
All minimization problems are  solved with the Chambolle-Pock algorithm \cite{chambolle2011first} using $200$ iterations for $\ell^1$-minimization, and $500$ iterations for TV and hybrid $\ell^1$-TV minimization. This was also the case for the complementary  approach, where for $10^5$ and $10^4$ photon counts we  chose $N=10$ and for $10^3$ photon counts we chose $N=4$ outer iterations. We take the  $n$-th initial value for the $\coeff$ and $\signal$ update as $\coeff_{n-1}$ and  $\signal_{n-1}$, respectively. For  $\analysis$ we use a self-designed TI curvelet transform that in the  case of  limited view data is adapted to the visible wedge; see Appendix~\ref{app:TICT}. Total variation is implemented  as the  $(2,1)$-norm of the discrete gradient computed with finite differences.

The regularization parameters for Algorithm~\ref{alg:DC}  are optimized for $\mu, \alpha, \beta $ with $\beta(n) = 2^n \beta$. Since the described reconstruction techniques rely on good choices for regularization parameters $ \alpha, \beta, \mu$ we perform   systematic parameter sweeps in all cases to obtain optimal reconstructions and a fair comparison. The parameters were optimized in terms of the relative $\ell^2$ reconstruction error $ \norm{\signal_{\rm rec} - \signal}_2 / \norm{\signal}_2$, where  $\signal$ is the true signal and $\signal_{\rm rec}$  the reconstruction. For each parameter and method, we performed a 1D grid search to obtain the lowest $\ell^2$ reconstruction error. In particular, for the proposed complementary $\ell^1$-TV algorithm, we first determine the optimal parameter $\alpha$, and used the optimal choice of the $\coeff$-update as input for the optimization of the parameter $\beta$. All subsequent iterations where then calculated using these parameters.   

For limited view experiments, we chose angular sampling points $\omega(\winkel) = (\cos (\winkel), \sin (\winkel) )$  with  $ \winkel = -\ang{65}, \dots, \ang{64}$ resulting in a total number of 130 directions  covering an angular domain of \ang{130}.  For the sparse view problem we generate Radon data with an angular range of \ang{180}, and a total number of  $50$ angular projections. Photon noise  using  $10^4$ photon counts per bin was added to the data.

\begin{figure}[tbh!] \centering
 \includegraphics[width=0.19\columnwidth,height=0.19\columnwidth]{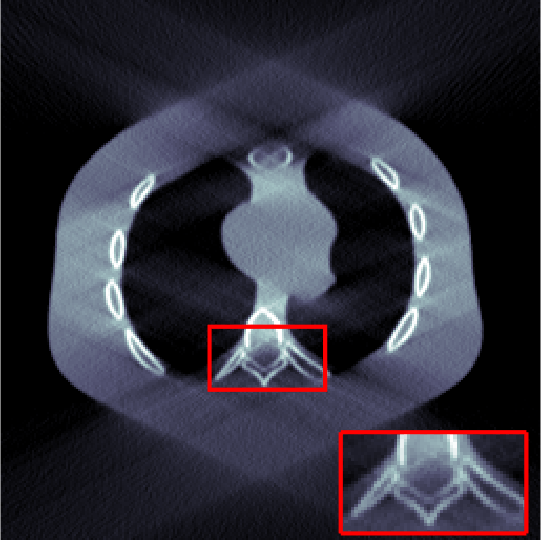}
\includegraphics[width=0.19\columnwidth,height=0.19\columnwidth]{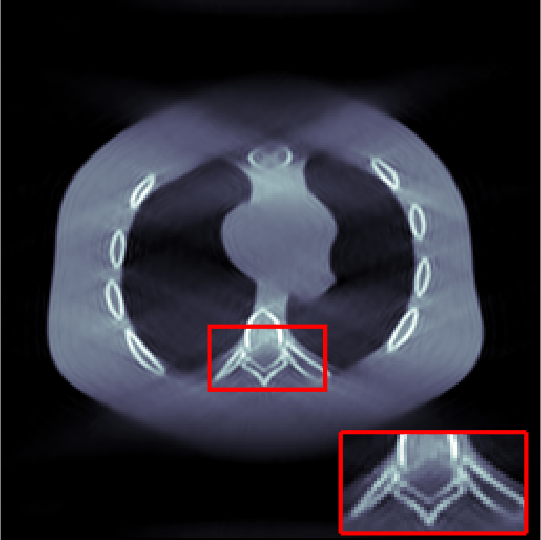}
 \includegraphics[width=0.19\columnwidth,height=0.19\columnwidth]{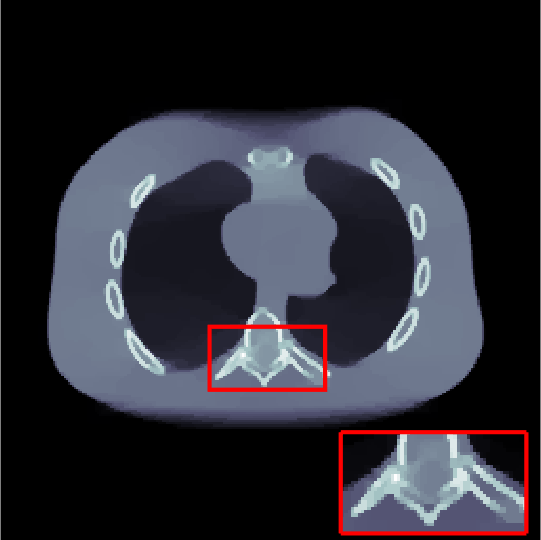}
 \includegraphics[width=0.19\columnwidth,height=0.19\columnwidth]{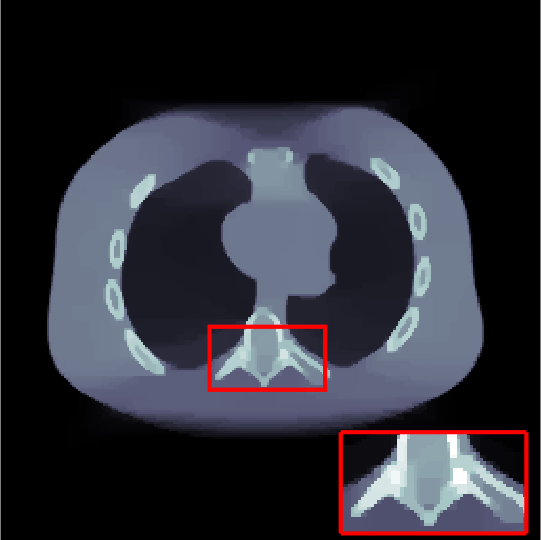}       
\includegraphics[width=0.19\columnwidth,height=0.19\columnwidth]{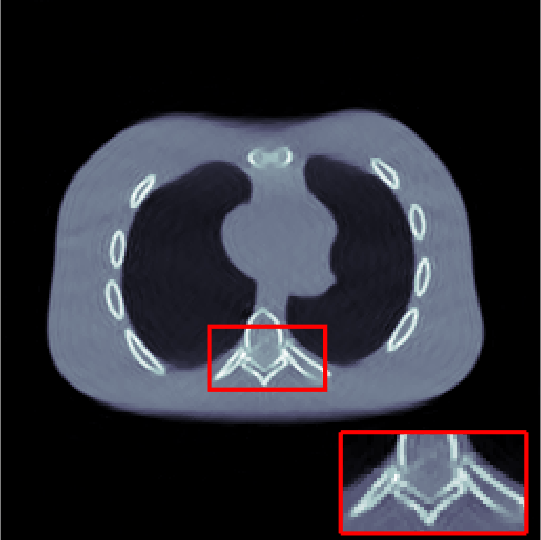}
 \\
\includegraphics[width=0.19\columnwidth,height=0.19\columnwidth]{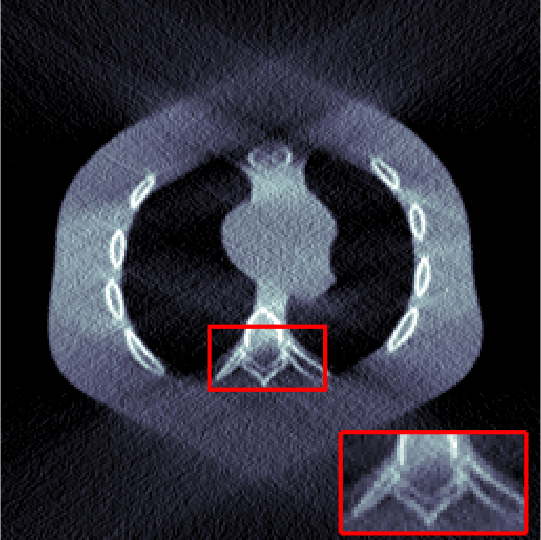}
\includegraphics[width=0.19\columnwidth,height=0.19\columnwidth]{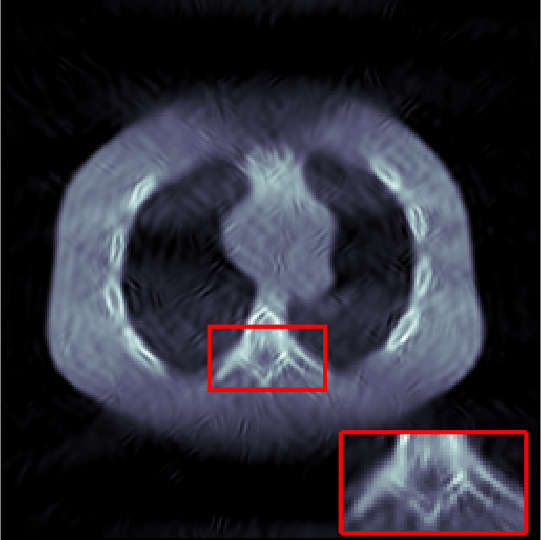}
\includegraphics[width=0.19\columnwidth,height=0.19\columnwidth]{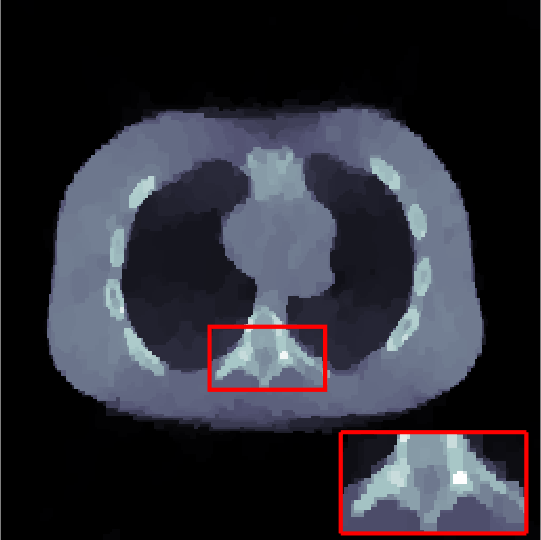}
\includegraphics[width=0.19\columnwidth,height=0.19\columnwidth]{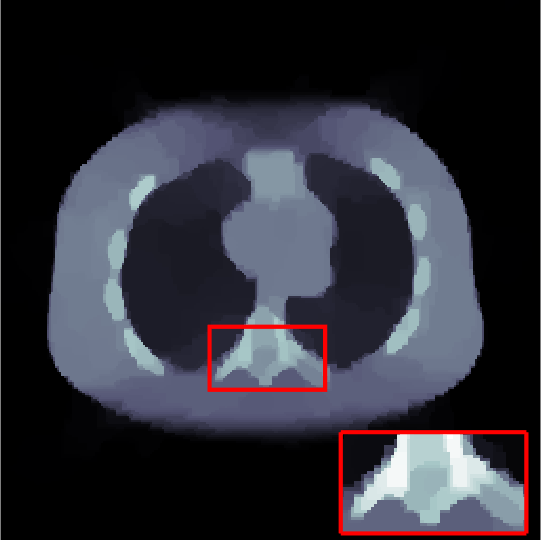}
\includegraphics[width=0.19\columnwidth,height=0.19\columnwidth]{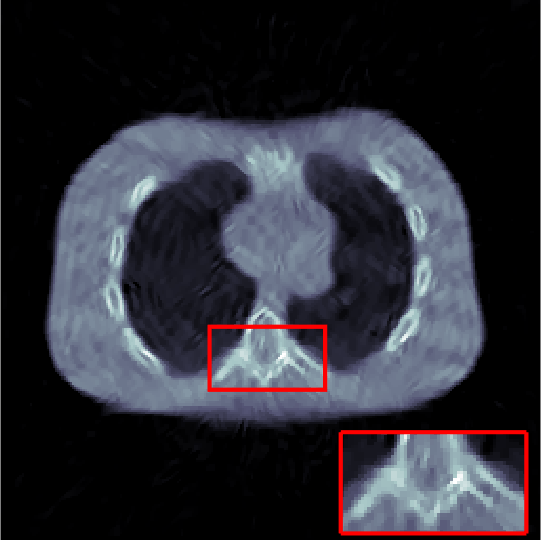} 
\\
\includegraphics[width=0.19\columnwidth,height=0.19\columnwidth]{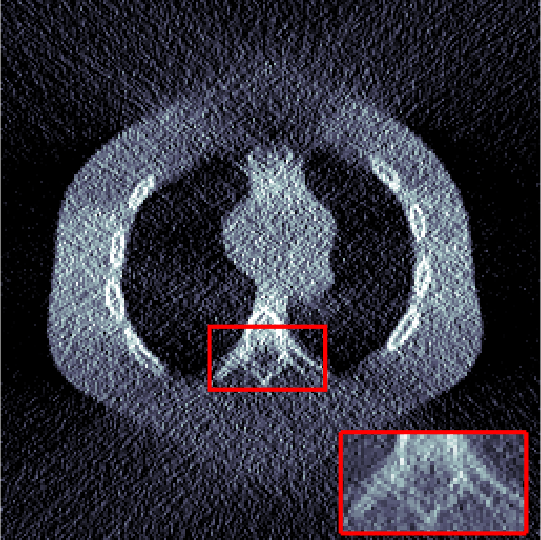}
\includegraphics[width=0.19\columnwidth,height=0.19\columnwidth]{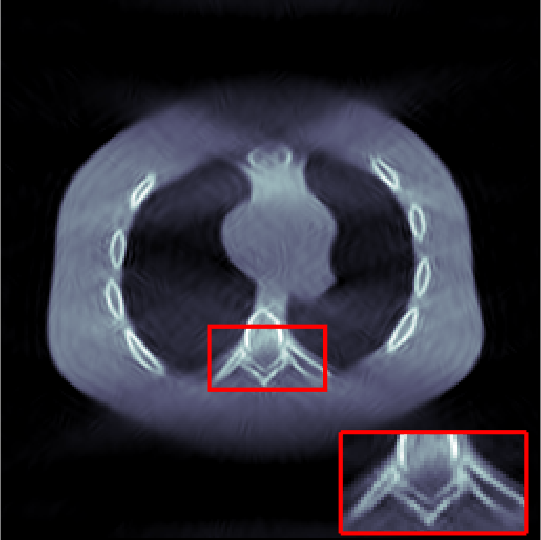}
\includegraphics[width=0.19\columnwidth,height=0.19\columnwidth]{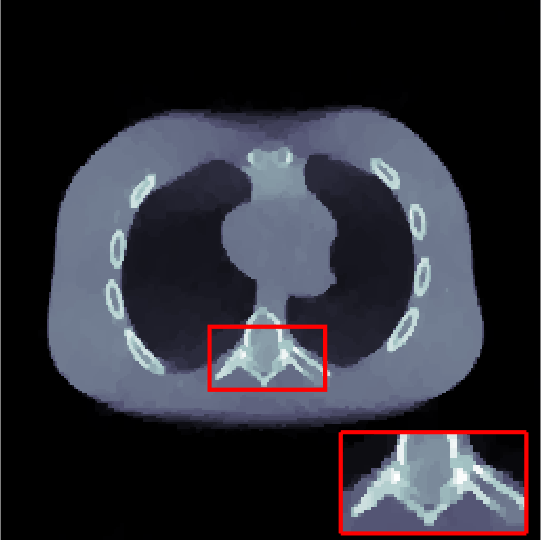}
\includegraphics[width=0.19\columnwidth,height=0.19\columnwidth]{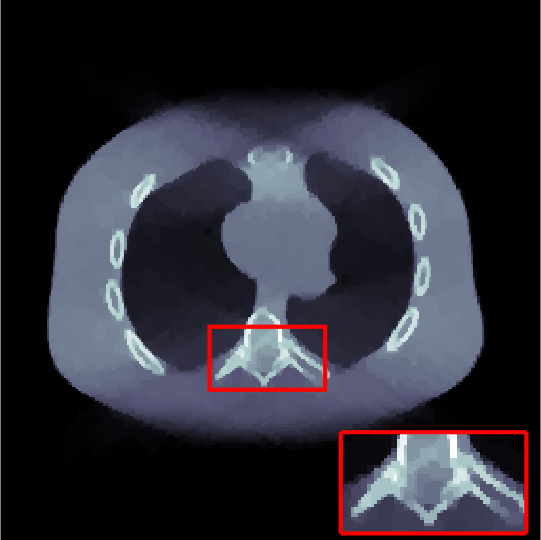}
\includegraphics[width=0.19\columnwidth,height=0.19\columnwidth]{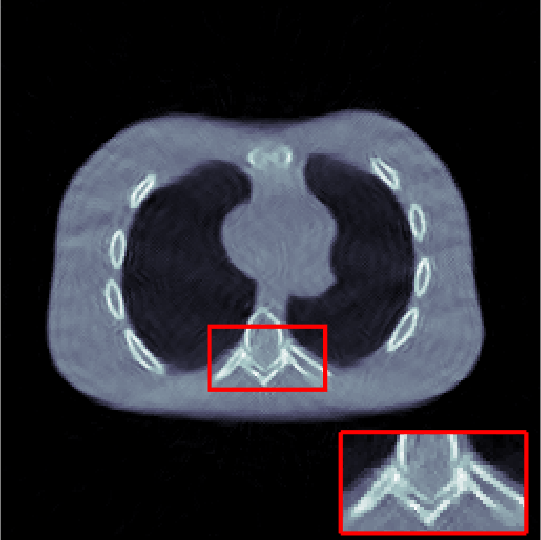}  

\caption{Reconstructions from limited view data.  From left to right, each column shows results using FBP, $\ell^1$-curvelet reconstruction, TV reconstruction, hybrid $\ell^1$-TV and complementary $\ell^1$-TV regularization. Each row corresponds to a different number of $10^5$,  $10^4$ and $10^3$ photon counts. The pixel value range is set to $[0, 1]$ for all images.}
\label{fig:LV}
\end{figure}

\subsection{Results for limited view data}

Figure~\ref{fig:LV} shows reconstruction results for the limited view problem using FBP, $\ell^1$ curvelet reconstruction, TV reconstruction, hybrid $\ell^1$-TV and the proposed complementary $\ell^1$-TV reconstruction.   The results show that the complementary $\ell^1$-TV approach seems to combine the denoising and artifact removing properties of the regularizers in an optimal way.  Taking a closer look at the lowest noise level ($10^5$  photon counts) in the first row, we see that the FBP-reconstruction (left column) and the $\ell^1$-reconstruction (column 2)  suffer from severe  limited view artifacts. While the TV regularized  (column 3) shows less artifacts, we find on the other hand that the fine details of the spine in the magnified part of the image are not reconstructed correctly anymore. This is typical for TV regularization when the regularization parameter  has to be chosen too high in order to address the  noise, resulting in block like artifacts.  A similar observation holds true for the hybrid $\ell^1$-TV reconstruction (column 4). Taking a closer look at the results for the proposed  algorithm (last column) we see that not only are we able to remove the limited view artifacts, but also to recover the fine details accurately. Furthermore, in comparison to the TV reconstruction we  observe that the overall shape of the phantom is better approximated by our approach.

\begin{table}[tbh!]
    \begin{center}
    \begin{tabular}{| l |  l | l | l | l |}
    \toprule 
   \# photons & method &  $\ell^2$-error  & PSNR & SSIM \\
      \midrule
    \multirow{5}{*}{$10^5$}  & FBP & 0.2496 & 17.1021 & 0.2693 \\
    & $\ell^1$ & 0.0756 & 22.590 & 0.559  \\
   & TV & 0.0187 & 29.725 & \textbf{0.953} \\   
   & $\ell^1$-TV & 0.0368 & 25.4124 & 0.8540 \\   
    & proposed & \textbf{0.0103} & \textbf{31.438} & 0.949  \\
    \midrule
   \multirow{5}{*}{$10^4$} & FBP & 0.2719 & 16.7306 & 0.1635 \\
     & $\ell^1$ & 0.0784 & 22.1291 & 0.5430  \\
   & TV & 0.0246 & 27.1590 & \textbf{0.9210} \\   
   & $\ell^1$-TV & 0.0500 & 24.0859 & 0.7633\\ 
    & proposed & \textbf{0.0161} & \textbf{29.0141} & 0.8815 \\
    \midrule
   \multirow{5}{*}{$10^3$} & FBP & 0.4961 & 14.1189 & 0.0696 \\
     & $\ell^1$ & 0.0907 & 21.4974 &  0.4328 \\
   & TV & 0.0411 & 24.9321 & \textbf{0.8621} \\   
   & $\ell^1$-TV & 0.0545 & 23.7100 & 0.7898\\ 
    & proposed & \textbf{0.0311} & \textbf{26.1420} &  0.7906 \\
    \bottomrule
    \end{tabular}
    \end{center}
    \caption{Reconstruction errors for limited view reconstructions.}
    \label{tab:limitedview}
\end{table}

Similar conclusions can be drawn from the  second row of Figure~\ref{fig:LV} showing results for $10^4$ photon counts. Here for the TV and hybrid $\ell^1$-TV regularization even more details are lost. For the other methods, we still have a high level of details visible in the recovered images. However, only for the  proposed method we also obtain an artifact free reconstruction. We attribute the remaining perturbations to the soft-thresholding procedure, that are part of the $\coeff$-update step.  The bottom row in Figure~\ref{fig:LV} shows  the reconstructions using $10^3$  photon counts (the highest noise level  in our experiments). As we see, no method is able to recover the fine structures reliably anymore. However, note that for TV and  hybrid $\ell^1$-TV regularization some of the ribs, which are boundaries of ellipse like structures, now appear to be filled. Simple curvelet-$\ell_1$ regularization and the complementary $\ell^1$-TV approach  still recover the fine holes inside these structures. Again, our method is  capable of removing the limited view artifacts, while also being able to produce a good approximation to the overall shape and details of the phantom.

Summarizing, we can say that our proposed algorithm combines the advantage of both, the denoising capabilities of curvelet-$\ell_1$ regularization, the artifact removal and data recovery properties of the TV regularization approach. A quantitative comparison is given in Table~\ref{tab:limitedview} which compares the reconstructions in terms of the relative $\ell_2$-error, the peak signal-to-noise ratio (PSNR), as well as the structural similarity index measure (SSIM). The best values in each group are highlighted by bolt letters. As we can see, the complementary $\ell^1$-TV approach produces the best reconstructions in terms of the $\ell^2$-error and PSNR, while simple TV regularization is optimal in terms of the SSIM. We find that quantitatively, TV regularization and the complementary $\ell^1$-TV approach are rather similar. However,  qualitatively the advantages of the complementary $\ell^1$-TV method are clearly visible.

\begin{figure}[tbh!]
\centering
\includegraphics[width=0.19\columnwidth,height=0.19\columnwidth]{images/p4_fbp_zoomed.png}
\includegraphics[width=0.19\columnwidth,height=0.19\columnwidth]{images/p4_syn_zoomed.png}  \includegraphics[width=0.19\columnwidth,height=0.19\columnwidth]{images/p4_tv_zoomed.png}
\includegraphics[width=0.19\columnwidth,height=0.19\columnwidth]{images/p4_combined_zoomed.png}
\includegraphics[width=0.19\columnwidth,height=0.19\columnwidth]{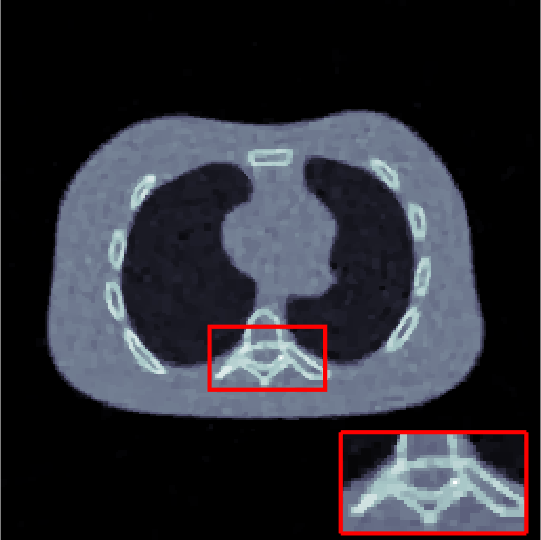}
\caption{Reconstructions from sparse view data using.   From left to right: FBP, $\ell^1$-curvelet,  TV reconstruction, hybrid $\ell^1$-TV, and complementary $\ell^1$-TV. The pixel value range is set to $[0, 1]$ for all images.}
\label{fig:sparse}
\end{figure}

\subsection{Results for sparse view data}

Figure~\ref{fig:sparse} shows reconstruction results for the sparse view problem using FBP, $\ell^1$ curvelet reconstruction, TV reconstruction, hybrid $\ell^1$-TV and the proposed complementary $\ell^1$-TV regularization. We see that all reconstruction methods are able to reproduce the overall phantom rather good. Taking a closer look a the magnified details, we see that the  $\ell^1$-curvelet reconstruction  is able to image the spine rather good. However, we also see that the phantom also suffers from perturbations caused by the soft-thresholding of curvelet coefficients.  The TV regularized reconstruction  one hand does not show severe artifacts, but on the other hand is not able to well recover fine details. Furthermore, some of the inner holes of the ribs start to become filled by the TV regularization, similar to the limited view case. The hybrid $\ell^1$-TV and the proposed complementary $\ell^1$-TV reconstruction on the other hand are  able to incorporate both advantages from curvelet-$\ell_1$ as well as TV regularization. The spine is represented rather well and the phantom does not suffer from curvelet artifacts in both reconstructions.

\begin{table}[tbh!]
    \begin{center}
    \begin{tabular}{| l |  l | l | l | l | }
    \hline 
   \# photons  & method &  $\ell^2$-error  & PSNR & SSIM \\
      \toprule
      \multirow{5}{*}{$10^4$} & FBP & 0.1048 & 20.8702 & 0.1767 \\
     & $\ell^1$ & 0.0136 & 29.7290 & 0.7308 \\
    & TV & 0.0117 & 30.3933 & 0.9294 \\  
    & $\ell^1$-TV & \textbf{0.0080} & \textbf{32.0445} & 0.8884\\
    & proposed & 0.0101 & 31.0289 & \textbf{0.9302} \\
    \bottomrule
    \end{tabular}
    \end{center}
    \caption{Reconstruction errors for sparse view reconstructions.}
    \label{tab:sparse}
\end{table}

A quantitative error assessment is given in Table~\ref{tab:sparse}. Quantitatively, the hybrid $\ell^1$-TV method appears to perform slightly better than the other methods. The visual difference however is quite small and both methods produce equally good reconstructions, where the fine details in the phantom are well represented.

\section{Conclusion}

Similar to many other image reconstruction problems, limited-data CT suffers from instability regarding noise and  non-uniqueness, leading to  artifacts in image reconstruction.  Common regularization approaches use a single regularizer to address both issues, which is accurate for one of the two tasks but not well adapted to the other. To address this issue, in this paper we propose a complementary $\ell^1$-TV algorithm that advantageously combines the denoising properties of $\ell^1$-curvelet regularization and the data completion properties of TV. The main ingredient of our procedure is data-proximity coupling instead of the standard image-space coupling.  

There are many potential future research directions extending  our framework.  We can integrate the data-proximity   coupling into  other splitting type method using proximal terms such as the ADMM algorithm. Further, data-proximity coupling can be combined with preconditioning or other coupling terms. For example, one might replace $\norm{\Ko_\limset( \signal - \synthesis \coeff )}^2$ by $\norm{\mathbf{P}_{\ker (\Ko_\limset)} (\signal - \synthesis \coeff)}$ or may use hard constraints forcing $\Ko_\limset \synthesis \coeff = \Ko_\limset \signal$. Further, one can also  consider general discrepancy functionals $\dist$ in place of the least squares functional  $\norm{\Ko_\limset \signal- \data^\delta}^2/2$.  From the analysis side, studying convergence of iterative procedures as well as  regularization properties is an  important line of future research. Furthermore, a comprehensive investigation of TI-frames for iterative regularization methods would be an interesting research focus. This includes a thorough analysis of theoretical properties along with numerical experiments. In particular, in combination with the limited view CT problem, the study of wedge adapted curvelets, and similar extensions to other limited data problem, could be of high interest.

\section*{Acknowledgments}

The contribution by S.\, G. is part of a project that has received funding from the European Union’s Horizon 2020 research and innovation program under the Marie Sk\l{}odowska-Curie grant agreement No 847476. The views and opinions expressed herein do not necessarily reflect those of the European Commission.

\appendix

\section{Wedge-adapted TI curvelet frames}
\label{app:TICT}
 
Standard curvelets are not well adapted  to limited angle data as  some curvelets elements might may have  small visible components. Our aim  is therefore to construct a curvelet transform that is adapted to the limited view  data $\Ko_\limset$ where  $\limset = \set{(\cos \winkel, \sin \winkel) \mid \winkel \in [-\Phi, \Phi[ }$  for some  $\Phi < \pi/2$. The basic idea is to construct a specific partition of  the frequency plane that  respects the visible wedge $W_\limset = \R \limset$; see left image in Figure~\ref{fig:setup}.    We work with TI variants as the lack of  translation invariance usually results  in visual artifacts \cite{mallat2008wavelet}.  For a recent work on TI-frames in the context of regularization theory see  \cite{goppel2023translation}.

\subsection{Standard TI curvelet frame}

Consider the basic radial and angular Mayer base windows $W \colon [1/2, 2] \to [0,1] $ and $V\colon [-1,1] \to [0,1]$  
\begin{align*}
W (r) &\triangleq 
\begin{cases}
\cos \left( (\pi/2)\nu \left(5-6r\right)\right)  & \text{if } 2/3 \leq r \leq 5/6
\\
1 & \text{if } 5/6 \leq r \leq 4/3
\\
\cos \left((\pi/2) \nu \left(3r -4\right)\right) & \text{if } 4/3 \leq r \leq 5/3
\\
0  & \text{otherwise} \,,
\end{cases}
\\[0.1em]
V(\winkel) &\triangleq  
\begin{cases}
1  & \text{if } \abs{\winkel}\leq 1/3
\\
\cos \left((\pi/2) \nu \left(3 \abs{\winkel} - 1\right)\right), & 1/3 \leq \abs{\winkel} \leq 2/3
\\
0  & \text{otherwise} \,.
\end{cases}
\end{align*}
Here, the auxiliary function $\nu$ is chosen to satisfy $\nu(0) = 0$, $ \nu(1) = 1$  and $\nu(x) + \nu(1-x) = 1$. Possible choices are polynomials, for example $\nu(x) = 3x^2 -2x^3, \nu(x) = 5x^3 - 5x^4 + x^5$ or $\nu(x) = x^4(35-84x+70x^2-20x^3)$. Depending on the choice of $\nu$, the angular windows have smaller or bigger overlap. In this paper we use $ \nu (x) = \chi_{(0,1)} s(x-1)/(s(x-1)+s(x))$ with  $s (x) = \exp (-(1+x)^{-2} -  (1-x)^{-2})$.

The  TI-curvelets are defined  in the frequency space using products  of  rescaled versions of the radial and angular base windows
\begin{equation}\label{eq:TIcurvelets}
    \Fo \boldframe_{j, \ell} (\xi) = 2^{3j/4}  W(2^{-j} r)^2 \cdot  V  \bigl( 2\pi \winkel / N_j  - \ell \bigr)^2 \,,
\end{equation}
where  $\xi =r (\cos \winkel, \sin \winkel)$ and $N_j  \in \N$ and $\La \triangleq \left\{(j,\ell) \mid j \in \N \wedge  \ell \in \{ -N_j/2 , \dots,  N_j/2-1 \} \right\}$.
 At every at scale $j$ the radial window $W(2^{-j} r)$ defines a ring  that is  further partitioned into $N_j$ angular wedges $V  \bigl( 2\pi \winkel / N_j  - \ell \bigr)$.

\begin{theorem} \label{thm:curvelet}
  $(\boldframe_{j, \ell})_{(j, \ell) \in \Lambda}$ is a tight  TI-frame.
\end{theorem}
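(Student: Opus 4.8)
\emph{Proof strategy.}
The plan is to diagonalise the convolutions by the Fourier transform and reduce the claim to a single pointwise identity for the frequency windows. Since each $W$ and $V$ is bounded and compactly supported, so is every $\Fo\boldframe_{j,\ell}$, and hence the standing requirement $\Fo\boldframe_{j,\ell}\in L^\infty(\R^2)$ in \eqref{eq:TI-frame} holds automatically. Using $\boldframe_{j,\ell}\ast\signal=\Fo^{-1}\big((\Fo\boldframe_{j,\ell})\cdot(\Fo\signal)\big)$ and Plancherel's theorem (as in the discussion after \eqref{eq:TI-frame}) one obtains, for every $\signal\in L^2(\R^2)$,
\[
\sum_{(j,\ell)\in\Lambda}\norm{\boldframe_{j,\ell}\ast\signal}^2
= C\int_{\R^2}\Phi(\xi)\,\abs{\Fo\signal(\xi)}^2\,\diff\xi ,
\qquad
\Phi(\xi)\triangleq\sum_{(j,\ell)\in\Lambda}\abs{\Fo\boldframe_{j,\ell}(\xi)}^2 ,
\]
with $C$ the same Plancherel constant relating $\norm{\signal}^2$ and $\int_{\R^2}\abs{\Fo\signal}^2$. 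Therefore $(\boldframe_{j,\ell})_{(j,\ell)\in\Lambda}$ is a TI-frame exactly when $\Phi$ is essentially bounded above and below, and it is a \emph{tight} TI-frame with $A=B=1$ exactly when $\Phi(\xi)=1$ for a.e.\ $\xi$ (the normalisation in \eqref{eq:TIcurvelets} being designed so that the constant comes out as $1$). So the whole statement reduces to this ``partition of unity'' identity.

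To verify $\Phi\equiv 1$ I would pass to polar coordinates $\xi=r(\cos\winkel,\sin\winkel)$ with $r>0$, $\winkel\in\R/2\pi\Z$. By \eqref{eq:TIcurvelets} the summand factorises into a radial factor depending only on $2^{-j}r$ and an angular factor depending only on $2\pi\winkel/N_j-\ell$, so one may sum first over $\ell$ at fixed $j$ and then over $j$. The identity then rests on two partition-of-unity facts, both consequences of the defining relation $\nu(x)+\nu(1-x)=1$ of the auxiliary function: (i) for every scale $j$ the translated angular windows tile the circle, i.e.\ the $\ell$-sum is a constant independent of $\winkel$ (and of $j$); and (ii) the dyadically rescaled radial windows tile $(0,\infty)$, i.e.\ the radial factors sum over $j$ to a constant. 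In both cases only two consecutive windows overlap at any point, and on an overlap the two arguments have the form $t$ and $1-t$; writing the window values as $\cos(\tfrac{\pi}{2}\nu(t))$ and $\cos(\tfrac{\pi}{2}\nu(1-t))=\sin(\tfrac{\pi}{2}\nu(t))$, their squares add to $1$, so each sum telescopes. Combining the angular constant (for every $j$) with the radial sum gives $\Phi\equiv 1$, matching the overall constant to $1$ being then a routine check of the normalisation in \eqref{eq:TIcurvelets}.

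I expect two places to require genuine care. First, the angular partition of unity must close \emph{seamlessly} around $\R/2\pi\Z$ at every scale: one must check that the chosen $N_j$ makes the $N_j$ translates $V(2\pi\winkel/N_j-\ell)$, $\ell=-N_j/2,\dots,N_j/2-1$, cover the circle with exactly the right two-fold overlap, with no gap or triple overlap at the wrap-around $\ell=\pm N_j/2$; this is where the continuous window design meets the discrete index set $\Lambda$ and forces mild conditions on $N_j$ and on the support of $V$. Second, the low-frequency region: the scales $j\in\N$ in \eqref{eq:TIcurvelets} leave a neighbourhood of $\xi=0$ uncovered, so — as is standard for curvelet systems — the construction must be completed by a coarse-scale radial low-pass window $W_0$ with $W_0(\cdot)^2+\sum_{j\ge 1}W(2^{-j}\cdot)^2\equiv 1$, and this term has to be included for $\Phi\equiv 1$ to hold on all of $\R^2$. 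The seamless angular wrap-around in (i), together with its compatibility with the radial tiling in (ii), is the step I expect to be the main obstacle; the Meyer-window overlap computation and the constant bookkeeping are then routine.
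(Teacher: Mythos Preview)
Your approach is essentially identical to the paper's: reduce via Plancherel to the pointwise identity $\sum_{j,\ell}\abs{\Fo\boldframe_{j,\ell}}^2=1$, then verify it by combining the angular partition of unity $\sum_\ell V(\cdot-\ell)^2=1$ with the radial partition of unity $\sum_j W(2^{-j}\cdot)^2=1$, both consequences of $\nu(x)+\nu(1-x)=1$. Your caveats about the $2^{3j/4}$ normalisation, the low-frequency gap when $j\in\N$, and the angular wrap-around go beyond the paper's three-line argument, which simply asserts the two partition-of-unity identities (writing $\sum_{j\in\Z}$) and invokes Plancherel.
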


\begin{proof}
From the definition of the basis windows we have $ \sum_{\ell=-N_j/2}^{N_j/2-1}  V  \bigl( 2\pi \winkel / N_j  - \ell \bigr)^2=1$ and  $\sum_{j\in\Z} \abs{W(2^{-j} r)}^2 = 1$  and therefore  $ \sum_{j,\ell} \abs{\Fo \boldframe_{j,\ell} (\xi)}^2 =1$.
By  the Plancherel identity this is equivalent to the tight frame condition  \eqref{eq:TI-frame} with  $A=B=1$. 
\end{proof}

Curvelet frames are defined by sampling $\boldframe_{j,\ell} \ast \signal$  at points $M_{j,\ell} k$ with a sampling matrix $ M_{j,\ell} \in \R^{2\times 2}$ and sampling index  $k \in \Z^2$. Defining  $\boldframe_{j,\ell,k} \coloneqq \boldframe_{j,\ell} ( x - M_{j,\ell} k)$, this results  in curvelet coefficients $
    \boldframe_{j,\ell} \ast \signal (M_{j,\ell} k)  = \inner{\boldframe_{j,\ell,k}, f} $. The family $  (\boldframe_{j,\ell,k})_{j,\ell,k}$  is a tight frame  which the  associated reproducing formula $\signal = \sum_{j,\ell,k} \inner{\signal, \boldframe_{j,\ell,k}} \overline{\boldframe_{j,\ell,k}}$.
Note that the scale and wedge depending sampling destroys the translation invariance  and the improved denoising property of TI systems \cite{coifman1995translation,goppel2023translation}.

\begin{figure}[tbh!]
    \centering
        \subfloat[]{\includegraphics[width=0.3\columnwidth]{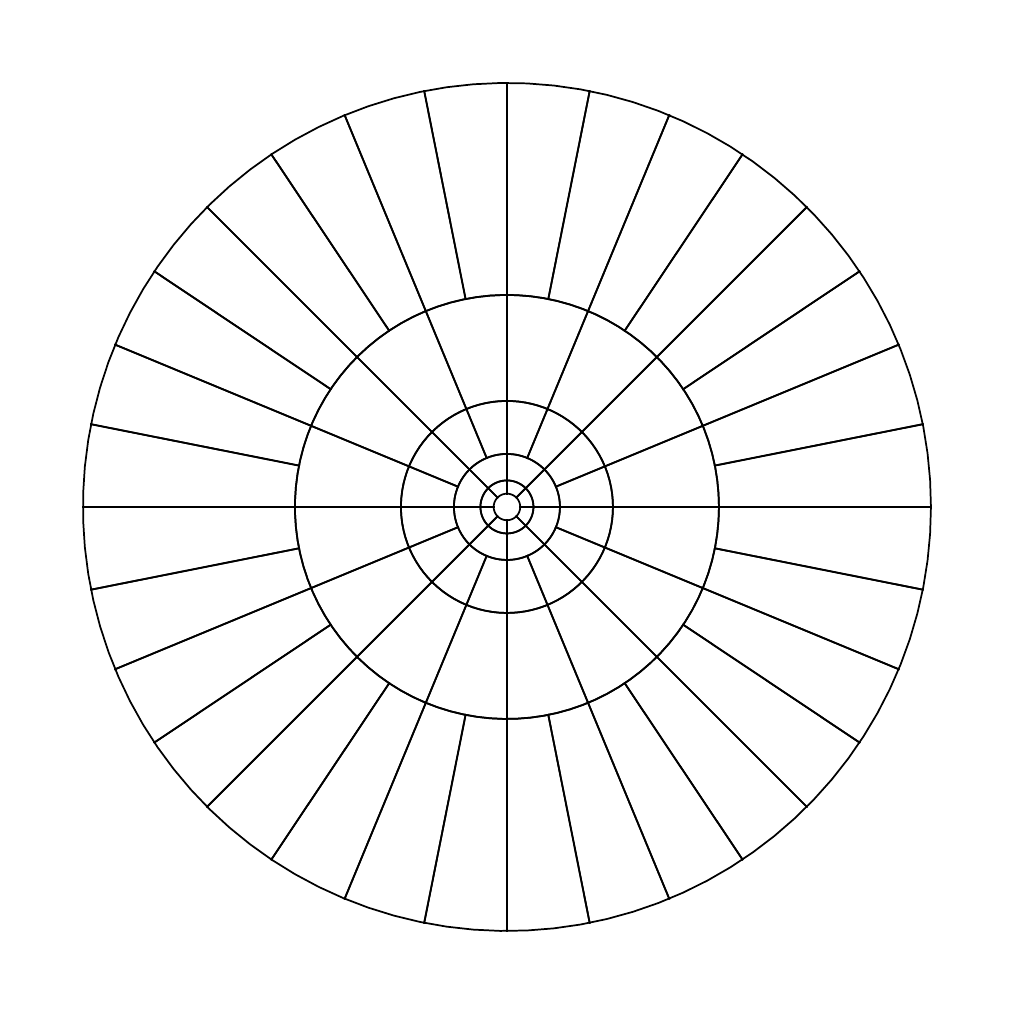}
        \label{fig:tiling1}}
    \hfil
        \subfloat[]{\includegraphics[width=0.3\columnwidth]{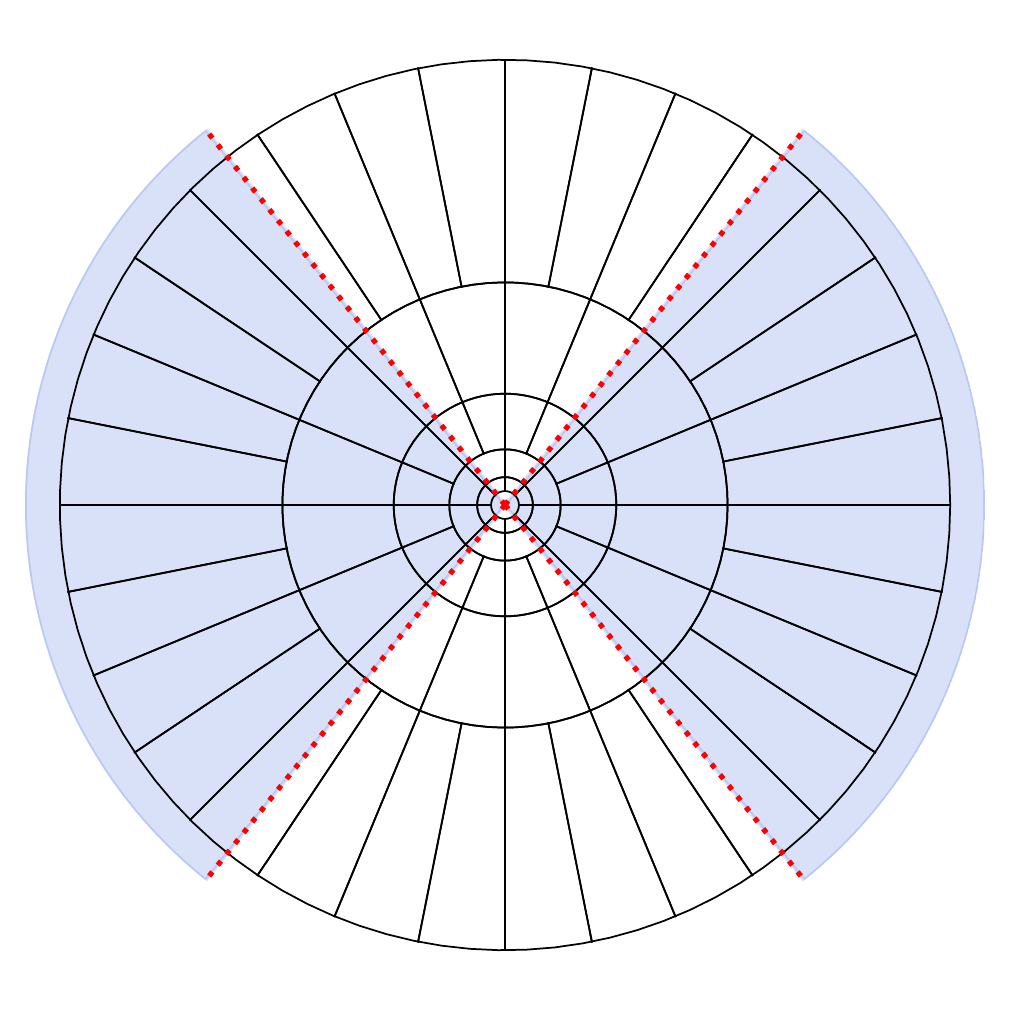}
        \label{fig:tiling-visible}}
    \hfil
        \subfloat[]{\includegraphics[width=0.3\columnwidth]{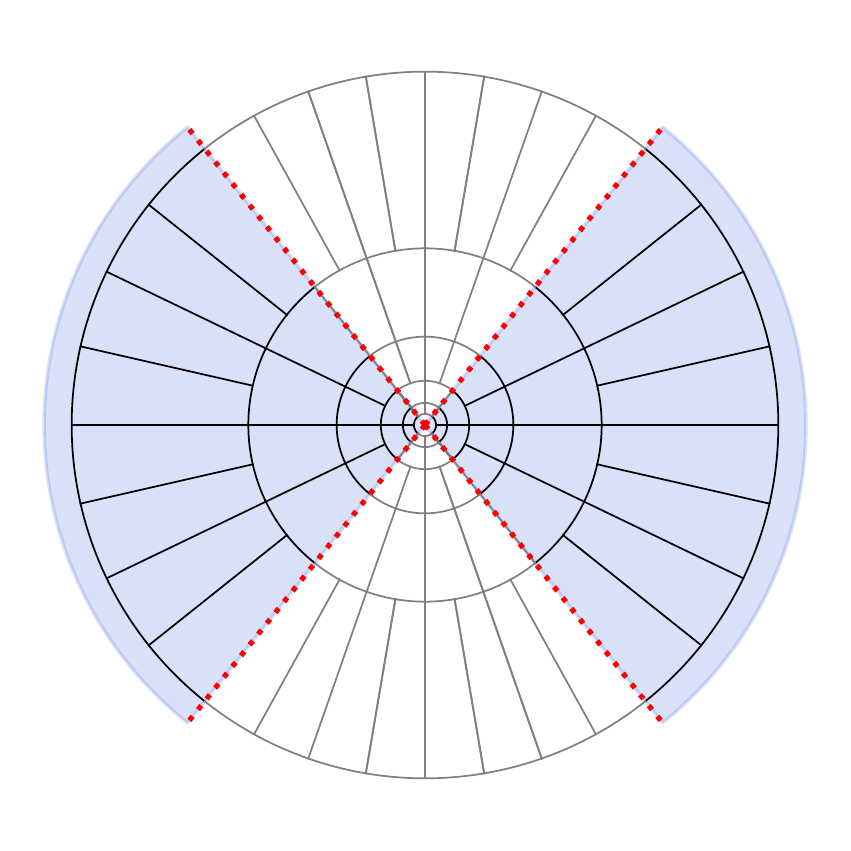}
        \label{fig:tiling-adapted}}
\caption{(a) Standard  curvelet tiling. (b) Visible wedge $W_\limset$ indicated in blue and non-adapted standard curvelet tiling.  (c) Visible wedge $W_\limset$  and wedge adapted tiling.}
\label{fig:tiling}
\end{figure}

\subsection{Wedge adaption}

Due to the limited angular range, the essential support of the Fourier transformed curvelets near the boundary of the visible wedge $W_\limset$ is not fully contained in $W_\limset$; see Figure \ref{fig:tiling-visible}. This results  in an  associated curvelet transform that is not well adapted to the kernel of the limited Radon transform \cite{frikel2013sparse}. In order to adapt  to the visible wedge we modify the  standard  angular  tiling and define two systems $(\boldframe_{j,\ell}^\vis)_{j,\ell}$ and $(\boldframe_{j,\ell}^\inv)_{j,\ell}$  that  we call  the visible and invisible parts of the curvelet  family. For that purpose we define the adjusted angular windows  $V^\vis (\winkel)$ and $V^\inv$ and make sure  that the windows at the boundary sum up  to one.  
Now the wedge-adapted TI curvelets  $\boldframe_{j,\ell}^\vis$, $\boldframe_{j,\ell}^\inv$  are defined as in   \eqref{eq:TIcurvelets} with $V$ replaced by $V^\vis$, $V^\inv$ respectively.   As in  Theorem \ref{thm:curvelet} one shows that the family  $(\boldframe_{j,\ell}^\vis,\boldframe_{j,\ell}^\inv)_{j,\ell} $ forms a TI-frame  of $L^2(\R^2)$. Opposed to the standard TI curvelet frame $(\boldframe_{j,\ell})_{j,\ell} $  it has controlled overlap at the boundary  between visible  and  invisible frequencies.  In a  similar manner we could construct wedge adapted curvelets where we use different numbers $N_j^d$ for each of the four basic wedges. Finally, note that each of  windows has  finite bandwidth. Thus similar to the case of the standard curvelets we  can use Shannon sampling theorem define a wedge adapted curvelet frame by wedge adapted sampling. A detailed mathematical analysis of properties  of its properties is beyond the scope of this paper.

\end{document}